\documentclass[final,3p,times,number]{elsarticle}

\usepackage[utf8]{inputenc}
\usepackage[T1]{fontenc}
\usepackage{amssymb}
\usepackage{amsthm}
\usepackage{latexsym}
\usepackage{amsmath}
\usepackage{color}
\usepackage{graphicx}
\usepackage{indentfirst}
\usepackage{mathrsfs}
\usepackage{pdfsync}
\usepackage{bm}
\usepackage{subcaption}
\usepackage{float}     
\usepackage{placeins} 
\usepackage[titletoc]{appendix}
\numberwithin{equation}{section}
\allowdisplaybreaks

\newtheorem{theorem}{\color{black}\indent Theorem}[section]

\newtheorem{assumption}{\color{black}\indent Assumption}[section]
\newtheorem{lemma}{\color{black}\indent Lemma}[section]

\newtheorem{definition}{\color{black}\indent Definition}[section]
\newtheorem{remark}{\color{black}\indent Remark}[section]

\newtheorem*{lemma*}{Lemma}

\newcommand{\TT}{\mathbb{T}}
\newcommand{\ZZ}{\mathbb{Z}}

\newcommand{\XX}{\mathrm{X}}
\newcommand{\dd}{\,\mathrm{d}}
\newcommand{\ii}{\,\mathrm{i}}
\newcommand{\avg}[1]{\left\langle #1 \right\rangle}

\journal{*******}
\biboptions{numbers,sort&compress}
\usepackage[colorlinks=true,linkcolor=blue,citecolor=blue,urlcolor=blue]{hyperref}
\begin{document}
\begin{frontmatter}
    \title{Angular Linearization and Quantitative Convergence of Statistical Ensembles for Weighted Integrable Hamiltonian Systems}
    \author[author1]{Xinyu Liu \footnote{ E-mail address : liuxy595@jlu.edu.cn}}
    \author[author1,author3]{Yong Li\corref{cor1} \footnote{ E-mail address : liyong@jlu.edu.cn} }
    \address[author1]{College of Mathematics, Jilin University, Changchun 130012, P. R. China.}
    \address[author3]{Center for Mathematics and Interdisciplinary Sciences, Northeast Normal University, Changchun 130024, P. R. China.}
    \cortext[cor1]{Corresponding author at : Center for Mathematics and Interdisciplinary Sciences, Northeast Normal University, Changchun 130024, P. R. China.}
    \begin{abstract}
     Within the framework of weighted integrable Hamiltonian systems, we study the long-time behavior of the associated statistical ensembles. We construct an action-dependent angular conjugacy that rectifies the nonuniform angular flow into a constant-speed linear flow, thereby reducing the ensemble dynamics to oscillatory integrals over the action domain. Using integration by parts along a tailored vector field and \(W^{1,1}\) approximations with controlled boundary terms, we derive inverse-in-time decay for each Fourier mode and complete the summation under nonresonance and uniform nondegeneracy assumptions. Consequently, under suitable regularity and admissible initial data, we prove that the ensemble converges to the weighted equilibrium measure at rate \(O(1/t)\), with an explicit parameter-dependent bound. The methodology provides a transferable analytic scheme for establishing limit theorems in near-integrable regimes.
    \end{abstract}
    \begin{keyword}
    statistical ensemble, weighted integrable Hamiltonian systems, Angular conjugation and linearization, Sobolev and \(W^{1,1}\) approximations, symplectic geometry
    \end{keyword}

    \end{frontmatter}


    \section{Introduction}\label{sec:1}
    \setcounter{equation}{0}
    \setcounter{definition}{0}
    \setcounter{proposition}{0}
    \setcounter{lemma}{0}
    \setcounter{remark}{0}

    In mathematics, statistical ensembles can be viewed as the study of weak convergence for the time evolution of an initial phase-space distribution under a dynamical flow: given a measurable initial density $f_0$ and a flow $\Phi_t$, the ensemble at time $t$ is $P_t := (\Phi_t)_{\#}(f_0\,\mathrm d\mu)$, and its response to an observable $G$ is $\langle G\rangle_t := \int G\,\mathrm dP_t$. The analysis of ensembles therefore focuses on the behavior of $\langle G\rangle_t$ as $t\to\infty$. This framework combines time evolution with measure theory, so that many statistical questions can be addressed in a unified way at the level of expectations, variances, and limit laws (e.g., the Law of Large Numbers and the Central Limit Theorem, as well as weak convergence)\cite{hafouta2023explicit,mitchell2019weak,LLNLIUXINYU}. From an applied point of view, ensemble methods provide prediction tools that are robust to microscopic uncertainty in complex Hamiltonian systems, plasmas, and molecular dynamics, with significant implications for control, materials, and nonequilibrium statistical physics \cite{dif2022transport,bhati2024equilibrium,defenu2403ensemble,nandy2024reconstructing} 

In recent years, LLN/CLT-type results for measurable dynamical systems have been established for many mixing or partially hyperbolic classes, following several complementary approaches, including spectral methods, transfer operators, and coupling techniques \cite{fleming2022functional,wang2025central,dolgopyat2022asymptotic}. For integrable Hamiltonian systems, existing results typically rely on Fourier analysis and small-divisor estimates; within the framework of conjugacy linearization and oscillatory integrals, they address long-time limits and convergence rates of ensembles \cite{klein2021uniform,de2022convergence,burby2023nearly}. However, in the absence of strong mixing or exponential decay, methods based on mixing are not directly applicable.

This paper focuses on a class of weighted integrable Hamiltonian systems. In action-angle coordinates $(I,\theta)\in\Omega\times\TT^N$, we introduce the weighted symplectic form
\begin{equation*}
\sigma_m \;=\; m(\theta)^{1/N}\sum_{j=1}^{N}\dd\theta_j\wedge\dd I_j,
\end{equation*}
which induces explicitly nonuniform angular velocities. Recent progress on non-volume-preserving angular dynamics and weighted symplectic structures includes, for example, mixing and spectral properties of smooth time changes \cite{avila2021mixing,fayad2021lebesgue}, asymptotics and breakdown of invariant tori via Birkhoff averages \cite{meiss2021birkhoff}, the construction of KAM quasi-periodic tori in dissipative spin-orbit models \cite{calleja2022kam}, and Hamiltonization schemes through singular cotangent models in fluid contexts \cite{coquinot2023singular}. These developments highlight the natural emergence and analytic value of nonuniform angular speeds and weighted symplectic geometry across different settings. Our model presents two core challenges: (i) Lebesgue volume is generally not preserved, so one must identify and use a weighted invariant measure; (ii) linearization requires solving the cohomological equation
\begin{equation*}
\omega(I)\cdot\nabla_\theta v_I(\theta)=b(\theta),\qquad \langle v_I\rangle=0,
\end{equation*}
and controlling small divisors under a Diophantine-type nonresonance condition, in order to construct the action-dependent angular conjugacy
\begin{equation*}
\phi=\Psi_I(\theta)=\theta+\omega(I)\,v_I(\theta).
\end{equation*}

Our main idea is as follows. We first use the conjugacy to straighten the angular dynamics into a constant-speed flow. We then express the ensemble response as oscillatory integrals over the action domain and, by integrating by parts along a suitable vector field—combined with $W^{1,1}$ approximations on Lipschitz domains and controlled boundary terms—obtain a $t^{-1}$ decay for each Fourier mode. Finally, we sum over modes at the summability threshold $r>N+1$ to derive quantitative convergence. The toolbox includes Fourier analysis and small-divisor estimates (for solving the cohomological equation), functional analysis and Sobolev-Hölder embeddings (for regularity and uniform bounds), pushforward of measures and Jacobian formulas (to construct and exploit the weighted invariant measure), and vector-field integration by parts for nonstationary oscillatory integrals (together with the density of $W^{1,1}$ by smooth compactly supported functions on Lipschitz sets).

Under appropriate Diophantine and nondegeneracy assumptions, we prove that for any $G\in C^r(\bar\Omega\times\TT^N)$ with $r>N+1$ and any initial datum $f_0\in W^{1,1}_I(\Omega\times\TT^N)$, the statistical ensemble converges quantitatively to the equilibrium associated with the weighted invariant measure at rate $O(1/t)$. The constants depend only on bounded-regularity data of the frequency map and the weight. This yields a uniform convergence rate in an integrable setting with nonuniform angular speeds—without invoking mixing—and provides analytic groundwork for extensions to near-integrable Hamiltonian systems.

The structure of the following text is as follows. Section~2 introduces the model, notation, and nonresonance assumptions. Section~3 constructs the angular conjugacy and establishes linearization and pushforward properties. Section~4 states and proves the main results. Section~5 concludes with a summary of the method and directions for future work.
    \section{Preliminaries}
    This section is devoted to a precise formulation of the system under consideration, together with the fundamental notations and standing assumptions that will be used in the following contents.
    
    Let's start with the definition of weighted Hamiltonian systems.

\begin{definition}\label{WHS}
Let $(M,\sigma)$ be a $2N$-dimensional symplectic manifold, and let $H=H(I)$ be an integrable Hamiltonian depending only on the action variables $I\in\Omega\subset\mathbb{R}^N$ in which $\Omega \subset \mathbb{R}^{N}$ is a bounded Lipschitz domain.  
Let $m:\mathbb{T}^N\to\mathbb{R}^{+}$ be a positive function, referred to as a weight function.  
We define the weighted symplectic form
\begin{equation*}
\sigma_m = m(\theta)^{1/N}\,\sum_{j=1}^N d\theta_j\wedge dI_j,\qquad (I,\theta)\in \Omega\times\TT^N.
\end{equation*}
The corresponding triple
\begin{equation}\label{WHS-triple}
(M,\sigma_m,H)
\end{equation}
is called a weighted Hamiltonian system.  

According to the weighted Hamiltonian system defined above, the equation of motion under the action-angle variable can be obtained

\begin{equation}\label{yundongfangcheng}
\begin{aligned}
  \dot{I} &= 0, \\
  \dot{\theta} &= m(\theta)^{-1/N}\,\omega(I),
\end{aligned}
\end{equation}
in which $\omega(I)=\nabla_{I}H(I)$.
\end{definition}

\begin{remark}
 If $m\equiv 1$, the form $\sigma_m$ reduces to the canonical symplectic form and the system coincides with the classical integrable Hamiltonian system.
\end{remark}

The phase space vector field of system \eqref{WHS-triple} is denoted as 
$$\XX (I,\theta)=(0,a(\theta)\omega(I)).$$
It follows that, with respect to the standard Lebesgue volume $\mathrm{d}I\,\mathrm{d}\theta$, 
the divergence of the vector field $\XX $ is  
\begin{equation*}
\nabla \cdot \XX (I,\theta)
= \sum_{k=1}^{N}\frac{\partial\big(a(\theta)\,\omega_k(I)\big)}{\partial\theta_k}
=\underbrace{\sum_{j=1}^{M}\frac{\partial (0)}{\partial I_j}}_{=0}
+ \sum_{k=1}^{N}\frac{\partial\big(a(\theta)\omega_k(I)\big)}{\partial\theta_k}
= \omega(I)\cdot \nabla_{\theta} a(\theta).
\end{equation*}
Hence, unless $m(\theta)$ is constant (equivalently, $a$ is constant), 
one has in general $\nabla \cdot \XX \neq 0$. 
Consequently, the Lebesgue volume is not preserved.
\begin{lemma}
There exists a family of weighted invariant measures
\begin{equation*}
\quad \mathrm{d} \mu_\ast := \rho(\theta)\,\mathrm{d}\theta\,\mathrm{d} I,\qquad 
\rho(\theta)=C\,m(\theta)^{1/N},\ C>0,\quad
\end{equation*}
such that for all $t$, if~$\Phi_t$~denotes the flow of system~\eqref{yundongfangcheng}, then
\begin{equation*}
\int_{\Omega\times\mathbb{T}^{N}} \varphi\circ \Phi_t \ \mathrm{d}\mu_\ast
  \;=\;
  \int_{\Omega\times\mathbb{T}^{N}} \varphi \ \mathrm{d}\mu_\ast,
  \qquad \forall\,\varphi\in C_c^\infty(\Omega\times\mathbb{T}^{N}).
\end{equation*}
\end{lemma}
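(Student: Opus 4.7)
The plan is to verify invariance by checking that the weighted density $\rho(\theta)=Cm(\theta)^{1/N}$ renders the vector field $\XX$ divergence-free in the sense of the measure $\dd\mu_\ast$. The underlying principle is the standard Liouville criterion: a smooth density $\rho\,\dd\theta\,\dd I$ is preserved by the flow of $\XX$ if and only if $\nabla\cdot(\rho\XX)\equiv 0$, in which case the transport equation $\partial_t\rho_t+\nabla\cdot(\rho_t\XX)=0$ with $\rho_0=\rho$ has the stationary solution $\rho_t\equiv\rho$, and invariance of integrals against $C_c^\infty$ test functions follows from duality.

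First I would observe that in $\XX(I,\theta)=(0,m(\theta)^{-1/N}\omega(I))$, multiplying by the candidate weight is precisely designed to annihilate the angular weight: $\rho(\theta)\XX(I,\theta)=(0,C\omega(I))$. The action component of $\rho\XX$ vanishes and therefore contributes nothing to the divergence, while the angular components depend only on the action variable, so $\partial_{\theta_k}(C\omega_k(I))=0$ for each $k$. Hence $\nabla\cdot(\rho\XX)\equiv 0$ pointwise on $\Omega\times\TT^N$. Then I would conclude invariance by the following short computation for $\varphi\in C_c^\infty(\Omega\times\TT^N)$:
\begin{equation*}
\frac{\dd}{\dd t}\int \varphi\circ\Phi_t\,\dd\mu_\ast
=\int (\XX\cdot\nabla)(\varphi\circ\Phi_t)\,\rho\,\dd\theta\,\dd I
=-\int (\varphi\circ\Phi_t)\,\nabla\cdot(\rho\XX)\,\dd\theta\,\dd I=0,
\end{equation*}
where the integration by parts is legitimate because $\varphi$ has compact support in the action direction (no boundary contribution from $\partial\Omega$) and the angular direction is periodic. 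Integration then gives $\int \varphi\circ\Phi_t\,\dd\mu_\ast=\int \varphi\,\dd\mu_\ast$.

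The main obstacle, such as it is, is really the conceptual step of identifying the correct exponent $1/N$ in $\rho$. This exponent is dictated by the structural requirement that $\rho(\theta)\cdot m(\theta)^{-1/N}$ be independent of $\theta$, so that the angular flux $\rho\XX$ inherits the $\theta$-independence of $\omega(I)$ and becomes divergence-free automatically. Once this matching is seen, the divergence calculation is immediate and the remaining steps reduce to a standard Liouville-type argument; the positivity and smoothness of $m$ ensure that $\rho\in C^\infty(\TT^N)$ is a legitimate density, and the normalization constant $C>0$ is free (it can be chosen so that $\mu_\ast$ is a probability measure on $\Omega\times\TT^N$ whenever $\Omega$ has finite volume, but the invariance statement itself is independent of this choice).
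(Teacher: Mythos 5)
Your proof is correct and follows essentially the same route as the paper: both reduce invariance to the stationary continuity equation $\nabla\cdot(\rho\XX)=0$, observe that $\rho\XX=(0,C\,\omega(I))$ is $\theta$-independent precisely because $\rho(\theta)a(\theta)\equiv C$, and conclude by integration by parts against test functions. You merely verify the given density where the paper derives it from the condition $\rho a\equiv\text{const}$, and you spell out the duality step that the paper only sketches; the substance is identical.
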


\begin{proof}
Consider a trial density $\rho(\theta)$, leading to the measure
$\mathrm{d}\mu=\rho(\theta)\,\mathrm{d}\theta\,\mathrm{d} I$.  
Invariance is equivalent to the stationary continuity equation
\[
\nabla\cdot\big(\rho \XX \big)=0.
\]
Since $\XX $ has no $I$-component and $\rho$ is independent of $I$, one computes
\begin{equation*}
\nabla \cdot(\rho \XX )
=\sum_{k=1}^{N}\frac{\partial\big(\rho(\theta)\,a(\theta)\,\omega_k(I)\big)}{\partial\theta_k}
= \omega(I)\cdot\nabla_\theta\big(\rho(\theta)\,a(\theta)\big).
\end{equation*}
This expression vanishes identically provided that
\begin{equation*}
\rho(\theta)\,a(\theta)\equiv C\quad(\text{constant}).
\end{equation*}
Recalling $a(\theta)=m(\theta)^{-1/N}$, we obtain
\begin{equation*}
\rho(\theta)=C\,a(\theta)^{-1}=C\,m(\theta)^{1/N}.
\end{equation*}
Hence $\mathrm{d}\mu_\ast=\rho\,\mathrm{d}\theta\,\mathrm{d} I$ is an invariant measure.  
For all test functions $\varphi$, conservation along the flow follows by integration by parts or, equivalently, by verifying that the Jacobian of $\Phi_t$ with respect to $\mathrm{d}\mu_\ast$ is unity.
\end{proof}
For simplicity of notation in the following contents, let 
\begin{equation*}
  \bar{a}:=\frac{(2\pi)^N}{\int_{\TT^N} \rho(\theta)\,\mathrm{d}\theta}
=\frac{(2\pi)^N}{\!\int_{\TT^N} a(\theta)^{-1}\,\mathrm{d}\theta},
\qquad
b(\theta):=\frac{\bar {a}}{a(\theta)}-1.
\end{equation*}

Given an initial density
$f_0\in L^1(\Omega\times\TT^N,\dd\mu_\ast)$, we define the time $t$ pushforward (evolved) measure
\begin{equation}\label{def:Pt}
\dd P_t := f_t(I,\theta)\,\dd\mu_\ast,\qquad
f_t := f_0\circ \Phi_{-t}.
\end{equation}
Equivalently, for any measurable $B\subset\Omega\times\TT^N$,
\begin{equation*}
P_t(B)=\int_{\Omega\times\TT^N}\mathbf 1_{B}\big(\Phi_t(I,\theta)\big)\,
f_0(I,\theta)\,\dd\mu_\ast.
\end{equation*}
in which $\mathbf 1_{B}$ is an indicator function of set $B$.

Let
\begin{equation*}
W(I):=\int_{\TT^N} f_0(I,\theta)\,\rho(\theta)\,\dd\theta,\qquad
Z_\rho:=\int_{\TT^N}\rho(\vartheta)\,\dd\vartheta.
\end{equation*}
If $\int_{\Omega\times\TT^N} f_0\,\dd\mu_\ast=1$ (probability normalization),
then $\int_\Omega W(I)\,\dd I=1$. The weighted equilibrium measure is
\begin{equation}\label{def:Peq}
\dd P_{\mathrm{eq}}^{(m)}(I,\theta)
:= W(I)\,\dd I \;\otimes\; \frac{\rho(\theta)}{Z_\rho}\,\dd\theta .
\end{equation}
Consequently, for any bounded measurable $G$,
\begin{equation}\label{1.44444}
\int G \dd P_{\mathrm{eq}}^{(m)}
= \int_{\Omega}\left(\frac{\int_{\TT^N} G(I,\theta)\rho(\theta)\dd\theta}{\int_{\TT^N}\rho(\vartheta)\dd\vartheta}\right) W(I) \dd I.
\end{equation}

In this paper, we assume that the frequency mapping of system \eqref{WHS-triple} satisfies the following condition.
\begin{assumption}\label{ass:nonresonance}
Let $m\in C^{q}(\TT^{N})$ with $q>2N$, and let $H\in C^{2}(\Omega)$ on  $\Omega$.  
Write $\omega(I)=\nabla_{I}H(I)$, set $a(\theta)=m(\theta)^{-1/N}$ and $\rho(\theta)=m(\theta)^{1/N}$.  
Assume:
\begin{itemize}
  \item[\textnormal{(D1)}] There exist $\alpha>0$ and $2(N-1)<2\tau\leq q-N-1$ such that
  \begin{equation}\label{eq:Dioph}
  |n\cdot\omega(I)| \ \ge\ \alpha \, |n|^{-\tau}, \qquad \forall\, I\in\Omega,\ \forall\, n\in\mathbb{Z}^{N}\setminus\{0\}.
  \end{equation}
  \item[\textnormal{(D2)}] There exists $\lambda>0$ such that the minimal singular value satisfies
  \begin{equation*}
    \sigma_{\min}\big(D\omega(I)\big)\ \ge\ \lambda, \qquad \forall\, I\in\Omega,
  \end{equation*}
  and $D\omega, D^{2}\omega$ are bounded on $\Omega$.
\end{itemize}
\end{assumption}

\section{Angular Conjugate - Linearization of Weighted Flow}

In the weighted Hamiltonian system \eqref{WHS-triple}, the equation of motion for the angular variables is given by .
\begin{equation*}
  \dot\theta = a(\theta)\,\omega(I).
\end{equation*}
It is obvious that the rotation speed depends on the angle $\theta$ and the flow is not uniform.
However, when studying long-term statistical behaviors (such as time averages and weak convergence), we aim to reduce the system to a constant-speed linear flow, since Fourier analysis can be conveniently applied only in the case of constant-speed flows.

To achieve this reduction, we attempt to construct a conjugate transformation that depends on variable $I$
\begin{equation*}
\phi = \Psi_I(\theta) := \theta + \omega(I)\,v_I(\theta),
\end{equation*}
such that the transformed dynamics satisfies
\begin{equation*}
  \dot\phi = \bar a\,\omega(I),
\end{equation*}
with constant speed $\bar {a}\,\omega(I)$. Substituting this ansatz leads precisely to the cohomological equation
\begin{equation}\label{eq:cheq}
\omega(I)\cdot\nabla_\theta v_I(\theta)
= b(\theta),\qquad \int_{\TT^N} v_I(\theta)\,\dd\theta=0.
\end{equation}

\begin{lemma}\label{cunzaiweiyi}
Let $m\in C^{q}(\TT^{N})$ with $m>0$, where $q\ge \tau+N+1$ is an integer.
For the frequency map $\omega(I)=\nabla_I H(I)$, we assume that the nonresonance condition in Assumption~\ref{ass:nonresonance} is in force.
For each $I\in\Omega$, equation \eqref{eq:cheq} admits a unique solution $v_I$ in the zero-mean class, which has the Fourier expansion
\[
v_I(\theta)=\sum_{n\in\ZZ^N}\hat{v}_{I,n}\,e^{\ii n\cdot \theta}.
\]
If $b$ has Fourier expansion $b(\theta)=\sum_{n\in\ZZ^N} \hat{b}_n\,e^{\ii n\cdot\theta}$ with $\hat{b}_0=0$,
then the Fourier coefficients of $v_I$ satisfy
\begin{equation}\label{eq:Fourier-coeff-v}
\hat{v}_{I,n}=\frac{\hat{b}_n}{\ii\,n\cdot\omega(I)}\quad (n\neq 0),\qquad \hat{v}_{I,0}=0.
\end{equation}
\end{lemma}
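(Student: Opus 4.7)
The plan is to solve the cohomological equation \eqref{eq:cheq} in the classical way: decompose on the Fourier basis of $\TT^N$, read off the unique formal solution mode-by-mode using the Diophantine condition, and then verify convergence of the resulting series from the regularity of the weight $m$. First I would write $v_I(\theta)=\sum_{n\in\ZZ^N}\hat v_{I,n}\,e^{\ii n\cdot\theta}$ and recall $b(\theta)=\sum_{n\in\ZZ^N}\hat b_n\,e^{\ii n\cdot\theta}$. Because the differential operator $\omega(I)\cdot\nabla_\theta$ acts diagonally in the Fourier basis as multiplication by $\ii\,n\cdot\omega(I)$, the equation decouples into the scalar identities $\ii\,(n\cdot\omega(I))\,\hat v_{I,n}=\hat b_n$ for every $n\in\ZZ^N$. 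The zero mode is automatically consistent: by the very definition of $\bar a$ one has $\int_{\TT^N}b\,\dd\theta=0$, so $\hat b_0=0$, and the normalization $\int v_I\,\dd\theta=0$ forces $\hat v_{I,0}=0$.

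For $n\neq 0$, hypothesis (D1) of Assumption~\ref{ass:nonresonance} yields $|n\cdot\omega(I)|\ge\alpha|n|^{-\tau}$ uniformly in $I\in\Omega$, so division is legitimate and produces the closed form \eqref{eq:Fourier-coeff-v}. The remaining task is to show that this candidate actually defines a solution. I would invoke the smoothness transfer from $m\in C^q(\TT^N)$, $m>0$, to $a=m^{-1/N}\in C^q$ and hence $b\in C^q$, combined with the standard Paley bound obtained by $q$ successive integrations by parts: $|\hat b_n|\le C\,\|b\|_{C^q}(1+|n|)^{-q}$. Substituting into \eqref{eq:Fourier-coeff-v} gives $|\hat v_{I,n}|\le (C\|b\|_{C^q}/\alpha)(1+|n|)^{\tau-q}$ uniformly in $I$. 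Because $q\ge \tau+N+1$, this estimate is summable over $\ZZ^N$, so $v_I$ converges absolutely to a continuous function with uniform-in-$I$ bounds, and the identity $\omega(I)\cdot\nabla_\theta v_I=b$ holds modewise (hence in $L^2$ and distributionally, with pointwise sense available once the $N+1$ margin is spent on an extra derivative).

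Uniqueness is then immediate: if $w_I$ is another zero-mean solution, the difference $u:=v_I-w_I$ satisfies the homogeneous equation $\omega(I)\cdot\nabla_\theta u=0$. Its Fourier coefficients must satisfy $(n\cdot\omega(I))\,\hat u_n=0$; by (D1) this forces $\hat u_n=0$ for $n\neq 0$, and the zero-mean condition forces $\hat u_0=0$, so $u\equiv 0$.

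The main obstacle I anticipate is not the algebraic inversion but the quantitative balancing of small divisors against available smoothness. The Diophantine exponent $\tau$ effectively costs $\tau$ derivatives when dividing by $n\cdot\omega(I)$, and absolute summability over $\ZZ^N$ then demands $q-\tau>N$. The assumption $q\ge \tau+N+1$ is tailored precisely to close this budget and produce uniform-in-$I$ control; this uniformity will be indispensable later, when the conjugation $\Psi_I=\theta+\omega(I)\,v_I(\theta)$ and its inverse must be shown to be well-defined diffeomorphisms of $\TT^N$ depending regularly on $I$.
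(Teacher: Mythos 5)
Your proposal is correct and follows essentially the same route as the paper: expand in Fourier modes, use the diagonal action of $\omega(I)\cdot\nabla_\theta$ together with the Diophantine bound (D1) to invert mode-by-mode, and obtain uniqueness from the homogeneous equation plus the zero-mean normalization. The convergence and classical-solution verification you sketch (the $|\hat b_n|\lesssim(1+|n|)^{-q}$ decay balanced against the $|n|^{\tau}$ small-divisor loss) is not part of the paper's proof of this lemma — the paper treats the solution here as purely formal and defers exactly that argument to Lemma~\ref{sol:Regularity} — so your write-up is a superset of the paper's.
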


\begin{proof}
  Assume $v_I(\theta)=\sum_{n\in\ZZ^N} \hat{v}_{I,n} e^{\ii n\cdot\theta}$, substituting into
the cohomological equation \eqref{eq:cheq} gives
\begin{equation*}
\sum_{n\in\ZZ^N} (\ii\,n\cdot\omega(I)\,)\hat{v}_{I,n}e^{\ii n\cdot\theta}
=\sum_{n\in\ZZ^N} b_n\,e^{\ii n\cdot\theta}.
\end{equation*}
Comparing coefficients (for $n=0$ we get $0=0$, consistent with the zero-mean condition), for $n\neq 0$ we obtain
\begin{equation*}
(\ii\,n\cdot\omega(I)\,)\hat{v}_{I,n}=b_n\quad\Longrightarrow\quad
\hat{v}_{I,n} = \frac{b_n}{\ii\,n\cdot\omega(I)}.
\end{equation*}

If another zero-mean solution $\tilde v_I$ existed, their difference $w_I:=v_I-\tilde v_I$ would satisfy
$\omega(I)\cdot\nabla_\theta w_I=0$ and $\int_{\TT^N} w_I\,\dd\theta=0$. Expanding in Fourier series yields
$(i\,n\cdot\omega(I))\,(w_I)_n=0$. By \eqref{eq:Dioph}, $n\cdot\omega(I)\ne 0$ for all $n\ne 0$, so
$(w_I)_n=0$; and the zero mean gives $(w_I)_0=0$, hence $w_I\equiv 0$. Uniqueness follows.

\end{proof}

However, the solution in Lemma \ref{cunzaiweiyi} is in fact only a formal one. Below we establish its regularity and verify that it is indeed a solution in the classical sense.

\begin{lemma}\label{sol:Regularity}
  The solution in Lemma \ref{cunzaiweiyi}, $v_I(\theta)=\sum_{n\in\ZZ^N} \hat{v}_{I,n} e^{\ii n\cdot\theta}$,
  \begin{equation*}
    \hat{v}_{I,n}=\frac{\hat{b}_n}{\ii\,n\cdot\omega(I)}\quad (n\neq 0),\qquad \hat{v}_{I,0}=0,
  \end{equation*}
  satisfies $\hat{v}_{I,n}\in C^{r}$, $r\in \ZZ$ and $0\leq r < q-\tau-N$.
\end{lemma}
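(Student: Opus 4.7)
The plan is to deduce classical $C^{r}$ regularity of $v_I$ from the absolute convergence of its Fourier series together with the Diophantine bound on small divisors. The whole argument reduces to balancing the decay of $\hat b_{n}$, inherited from the smoothness of $b$, against the loss $|n|^{\tau}$ inflicted by the divisors $n\cdot\omega(I)$, and then counting lattice points in $\ZZ^{N}$.

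First, I would record the Fourier decay of the right-hand side. Since $m\in C^{q}(\TT^{N})$ is strictly positive, $a=m^{-1/N}$ and $b=\bar a/a-1$ both lie in $C^{q}(\TT^{N})$. The standard estimate obtained by integrating by parts $q$ times (or equivalently bounding $n^{\beta}\hat b_{n}$ by $\|\partial^{\beta}b\|_{L^{1}}$ for $|\beta|=q$) yields a constant $C_{b}$, depending only on $\|b\|_{C^{q}}$, such that
\[
|\hat b_{n}|\le \frac{C_{b}}{(1+|n|)^{q}},\qquad n\in\ZZ^{N}.
\]
Combining this with (D1) from Assumption~\ref{ass:nonresonance} gives, for every $n\neq 0$ and uniformly in $I\in\Omega$,
\[
|\hat v_{I,n}|=\frac{|\hat b_{n}|}{|n\cdot\omega(I)|}\le \frac{C_{b}}{\alpha}\,(1+|n|)^{\tau-q}.
\]

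Next, I would upgrade this termwise bound to $C^{r}$ regularity via the Weierstrass $M$-test. For any multi-index $\beta$ with $|\beta|\le r$, the formally differentiated series
\[
\partial_{\theta}^{\beta} v_{I}(\theta)=\sum_{n\neq 0}(\ii n)^{\beta}\,\hat v_{I,n}\,e^{\ii n\cdot\theta}
\]
is majorized in modulus by $(C_{b}/\alpha)(1+|n|)^{r+\tau-q}$. Since the lattice sum $\sum_{n\in\ZZ^{N}\setminus\{0\}}(1+|n|)^{-s}$ converges precisely when $s>N$, absolute and uniform convergence holds as soon as $q-\tau-r>N$, i.e., $r<q-\tau-N$. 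Weierstrass then legitimizes termwise differentiation through order $r$, each derivative being a uniform limit of continuous trigonometric polynomials; hence $v_{I}\in C^{r}(\TT^{N})$ for every integer $r$ with $0\le r<q-\tau-N$, and the $C^{r}$ norm is controlled by a constant depending only on $\|b\|_{C^{q}}$, $\alpha$, $\tau$, and $N$, independently of $I$.

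Finally, once $r\ge 1$ (which is compatible with the standing assumption $q\ge\tau+N+1$), substituting the now-classical series $\nabla_{\theta}v_{I}$ back into the cohomological equation recovers $\omega(I)\cdot\nabla_{\theta}v_{I}=b$ pointwise, so that the formal identity of Lemma~\ref{cunzaiweiyi} becomes a genuine classical solution. The only delicate point in the above scheme is the sharp lattice-counting at the threshold $s=N$, which forces the strict inequality $r<q-\tau-N$ rather than $r\le q-\tau-N$; everything else is bookkeeping. The uniformity of the constants in $I$ obtained here will be essential when $v_{I}$ is later differentiated with respect to the action variable.
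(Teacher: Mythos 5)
Your proof is correct and follows essentially the same line as the paper: deduce $|\hat b_n|\lesssim(1+|n|)^{-q}$ from $b\in C^q$, lose a factor $|n|^{\tau}$ through the Diophantine bound (D1) to get $|\hat v_{I,n}|\lesssim(1+|n|)^{\tau-q}$ uniformly in $I$, and then sum over the lattice to reach $v_I\in C^r$ for $0\le r<q-\tau-N$. The only difference is the last step: the paper passes through $H^s(\TT^N)$ for $s<q-\tau-N/2$ and invokes the Sobolev embedding $H^{r+N/2+\varepsilon}\hookrightarrow C^r$, whereas you sum the differentiated series directly via the Weierstrass $M$-test using $\sum_{n\neq 0}(1+|n|)^{-s}<\infty$ iff $s>N$; both land on exactly the same threshold, and your route is arguably cleaner since it avoids the $\varepsilon$-bookkeeping and gives uniform convergence of the differentiated series (hence the classical verification of the cohomological equation) in one stroke. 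Your parenthetical that $r\ge 1$ is attainable is indeed justified under Assumption~\ref{ass:nonresonance}, since $2\tau\le q-N-1$ and $\tau>N-1$ force $q-\tau-N\ge\tau+1>N\ge 1$.
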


\begin{proof}
If $m\in C^{q}$, then $a(\theta)=m(\theta)^{-1/N}\in C^{q}$, so $b\in C^{q}$ and its Fourier coefficients satisfy
$$|b_n|\le C_b(1+|n|)^{-q}.$$
By \eqref{eq:Dioph} and \eqref{eq:Fourier-coeff-v}, we have
\begin{equation*}
|\hat{v}_{I,n}|\ \le\ C\, (1+|n|)^{\tau-q},\qquad C=C(\alpha,C_b).
\end{equation*}
Hence, for $s\in \mathbb{Z}$, $0<s<q-\tau-\frac{N}{2}$,
\begin{equation*}
\sum_{n}(1+|n|)^{2s}\,|\hat{v}_{I,n}|^2\ \leq C^2 \sum_{n}(1+|n|)^{2s+2(\tau-q)}<\infty,
\end{equation*}
which means $v_I\in H^s(\TT^N)$ and $\|v_I\|_{H^s}$ is uniformly bounded in $I$. Furthermore, 
for any integer
$$0\ \le\ r\ <\ \ell-\tau-N,$$
the Sobolev embedding $H^{r+N/2+\varepsilon}\hookrightarrow C^r$ (for any small $\varepsilon>0$) yields
\begin{equation}\label{eq:vk-regularity}
v_I\in C^{r}(\TT^N)\quad\text{and}\quad \|v_I\|_{C^{r}}\ \le\ C_{r}\ \ \text{(uniformly in $I$)},
\end{equation}
which implies Fourier series with derivatives converge uniformly, so the termwise differentiation below is justified
\begin{equation*}
\nabla_\theta v_I(\theta)=\sum_{n\ne 0} (\ii\,n)\,(v_I)_n\,e^{\ii n\cdot\theta},
\end{equation*}
with uniform convergence. Then
\begin{equation*}
\omega(I) \cdot \nabla_{\theta} v_{I}(\theta)
=\sum_{n\neq 0} (\ii\,n\cdot\omega(I))\,(v_I)_n\,e^{\ii n\cdot\theta}
=\sum_{n\neq 0} b_n\,e^{\ii n\cdot\theta}=b(\theta),
\end{equation*}
showing that the solution of \eqref{eq:cheq} holds classically.
\end{proof}
Next, we establish the boundedness of $\partial_I\Psi_I(\theta)$ and provide a proof; this lemma will be invoked in the proof of Theorem~\ref{theorem3.1} in Section~\ref{sec:3}.

\begin{lemma}\label{boundofpsi}
  There exists a constant \(C>0\), independent of \(I\), such that
\begin{equation*}
  \sup_{(I,\theta)\in\Omega\times\TT^{N}}\|\partial_I\Psi_I(\theta)\|\ \leq\ C .
\end{equation*}
\end{lemma}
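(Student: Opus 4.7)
The plan is to differentiate the explicit formula \(\Psi_I(\theta)=\theta+\omega(I)\,v_I(\theta)\) entry-wise in \(I\) and then control each resulting piece uniformly. By the product rule,
\[
\partial_{I_k}\Psi_I(\theta)\;=\;\bigl(\partial_{I_k}\omega(I)\bigr)\,v_I(\theta)\;+\;\omega(I)\,\partial_{I_k}v_I(\theta),
\]
so the task reduces to uniform-in-\((I,\theta)\) bounds on (i) \(v_I(\theta)\), (ii) \(D\omega(I)\), (iii) \(\omega(I)\), and (iv) \(\nabla_I v_I(\theta)\). Items (i)--(iii) are essentially already in hand: (i) is provided by Lemma~\ref{sol:Regularity}, which supplies \(\|v_I\|_{C^0}\le C\) uniformly in \(I\); (ii) is the second part of Assumption~\ref{ass:nonresonance}(D2); and (iii) follows from \(H\in C^{2}\) on the bounded Lipschitz domain \(\bar\Omega\).

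Step (iv) is the substantive one, and I would handle it by differentiating the Fourier representation from Lemma~\ref{cunzaiweiyi}. Since \(\hat v_{I,n}=\hat b_n/(\ii\,n\cdot\omega(I))\) for \(n\ne 0\), a direct computation gives
\[
\partial_{I_k}\hat v_{I,n}\;=\;\frac{\ii\,\hat b_n\,\bigl(n\cdot\partial_{I_k}\omega(I)\bigr)}{\bigl(n\cdot\omega(I)\bigr)^{2}},
\]
and combining the Diophantine lower bound \(|n\cdot\omega(I)|\ge\alpha|n|^{-\tau}\) from (D1), the boundedness of \(D\omega\) from (D2), and the Fourier decay \(|\hat b_n|\lesssim(1+|n|)^{-q}\) inherited from \(b=\bar a\,m^{1/N}-1\in C^{q}(\TT^{N})\) yields
\[
|\partial_{I_k}\hat v_{I,n}|\;\lesssim\;\frac{\|D\omega\|_{\infty}}{\alpha^{2}}\,(1+|n|)^{\,1+2\tau-q}.
\]
Under Assumption~\ref{ass:nonresonance} the calibration \(2\tau\le q-N-1\) renders this upper bound summable over \(n\in\ZZ^{N}\), which justifies termwise differentiation of the Fourier series and produces a uniform estimate \(\|\partial_{I_k}v_I\|_{C^{0}}\le C\) independent of \(I\). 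Assembling (i)--(iv) then gives the operator-norm bound claimed in the lemma.

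The main obstacle is the extra factor \((n\cdot\omega(I))^{-1}\) picked up when differentiating in \(I\): it doubles the small-divisor exponent from \(\tau\) to \(2\tau\), and on top of that the numerator acquires an additional factor \(n\cdot\partial_{I_k}\omega\) of order \(|n|\). The Fourier decay of \(\hat b_n\) must absorb both the doubled Diophantine loss and this extra linear factor in \(|n|\) before the resulting series can converge absolutely on \(\ZZ^{N}\). The quantitative match between the Diophantine exponent \(\tau\) and the regularity index \(q\) in Assumption~\ref{ass:nonresonance} is precisely what enables this absorption, and verifying it is the key check; once it is in place, the rest of the proof is bookkeeping of uniform norms.
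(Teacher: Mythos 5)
Your proposal is correct and follows essentially the same route as the paper's own proof: differentiate \(\Psi_I(\theta)=\theta+\omega(I)v_I(\theta)\) by the product rule, bound \(v_I\), \(\omega\) and \(D\omega\) directly, and control \(\partial_I v_I\) by differentiating the Fourier coefficients \(\hat v_{I,n}=\hat b_n/(\ii\,n\cdot\omega(I))\), which doubles the small-divisor exponent to \(2\tau\) and adds a factor \(|n|\) that the decay \(|\hat b_n|\lesssim(1+|n|)^{-q}\) must absorb. The only caveat—shared with the paper's own argument—is that at the endpoint \(2\tau=q-N-1\) the resulting exponent \(q-2\tau-1\) equals \(N\), so the sum over \(\ZZ^{N}\) is borderline divergent and strict inequality is actually needed.
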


\begin{proof}
  As noted above,
\begin{equation*}
|\hat v_{I,n}|\ \le\ \alpha^{-1}|\hat b_n|\,|n|^{\tau}
\ \le\ C_b\,\alpha^{-1}\,(1+|n|)^{-(q-\tau)},
\end{equation*}
and therefore
\begin{equation*}
\|v_I\|_{L^\infty(\TT^N)}
\ \le\ \sum_{n\neq 0}|\hat v_{I,n}|
\ \le\ C\sum_{n\neq 0}(1+|n|)^{-(q-\tau)}
\ <\ \infty.
\end{equation*}
Differentiate the Fourier coefficients with respect to $I$:
\begin{equation*}
\partial_I\hat v_{I,n}
=\partial_I\!\left(\frac{\hat b_n}{\ii\,n\cdot\omega(I)}\right)
=-\,\hat b_n\,\frac{D\omega(I)^{\!\top}n}{(n\cdot\omega(I))^{2}}.
\end{equation*}
Using $\|D\omega\|_{L^\infty(\Omega)}<\infty$,
$|(n\cdot\omega(I))^{-2}|\le \alpha^{-2}|n|^{2\tau}$, and $|n|\le(1+|n|)$, we obtain
\begin{equation*}
\|\partial_I\hat v_{I,n}\|
\ \le\ C\,|\hat b_n|\,\frac{|n|}{|n\cdot\omega(I)|^{2}}
\ \le\ C\,|\hat b_n|\,|n|^{2\tau+1}
\ \le\ C\,(1+|n|)^{-(q-(2\tau+1))}.
\end{equation*}
Hence
\begin{equation*}
\|\partial_I v_I\|_{L^\infty(\TT^N)}
\ \le\ \sum_{n\neq 0}\|\partial_I\hat v_{I,n}\|
\ \le\ C\sum_{n\neq 0}(1+|n|)^{-(q-(2\tau+1))}.
\end{equation*}
Moreover, since
\begin{equation*}
\partial_I\Psi_I(\theta)
= D\omega(I)\,v_I(\theta)\ +\ \omega(I)\otimes\partial_I v_I(\theta),
\end{equation*}
we have
\begin{equation*}
\sup_{(I,\theta)}\|\partial_I\Psi_I(\theta)\|
\ \le\ \|D\omega\|_{L^\infty(\Omega)}\,\sup_{I}\|v_I\|_{L^\infty}
\ +\ \|\omega\|_{L^\infty(\Omega)}\,\sup_{I}\|\partial_I v_I\|_{L^\infty}
\ \le\ C,
\end{equation*}
where $C$ depends only on
$\alpha,\tau,q,C_b,\|D\omega\|_{L^\infty},\|\omega\|_{L^\infty}$, and $N$, and is independent of $I$. This completes the proof.
\end{proof}

We now state the main result of this subsection.
\begin{theorem}\label{theorem2.1}
The conjugacy $\phi = \Psi_I(\theta) := \theta + \omega(I)\,\nu_{I}(\theta)$ is a $C^{r}$ diffeomorphism such that the new angular variables form a constant-speed linear flow with frequency $\bar{a}\omega(I)$, and $v_{I}=\bar{a}^{-1}d\phi$ is a constant multiple of the Lebesgue measure on $\TT^{N}$.
\end{theorem}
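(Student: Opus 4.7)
The plan is to verify the theorem in four conceptual steps: the $C^{r}$ regularity of $\Psi_{I}$, its global invertibility as a map $\TT^{N}\to\TT^{N}$, the constant-speed linearization, and the computation of the transported measure. First, I would point out that Lemma~\ref{sol:Regularity} already supplies $v_{I}\in C^{r}(\TT^{N})$ uniformly in $I$, while $\omega(I)\in\RR^{N}$ is a fixed vector for each action; since $v_{I}$ is $2\pi$-periodic, the map $\theta\mapsto\omega(I)v_{I}(\theta)$ descends to a function from $\TT^{N}$ into $\RR^{N}$, so $\Psi_{I}$ is a well-defined $C^{r}$ self-map of $\TT^{N}$, and Lemma~\ref{boundofpsi} gives the regularity in $I$ that will be used later.

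Next I would compute the differential. Writing $D\Psi_{I}(\theta)=\mathrm{Id}+\omega(I)\otimes\nabla_{\theta}v_{I}(\theta)$ as a rank-one perturbation of the identity, the matrix-determinant lemma yields
\begin{equation*}
\det D\Psi_{I}(\theta)=1+\omega(I)\cdot\nabla_{\theta}v_{I}(\theta)=1+b(\theta)=\frac{\bar a}{a(\theta)},
\end{equation*}
where I used the cohomological equation \eqref{eq:cheq} and the definition of $b$. Since $a(\theta)=m(\theta)^{-1/N}$ is strictly positive and bounded away from $0$ and $\infty$, the Jacobian is positive and uniformly bounded, so $\Psi_{I}$ is a local $C^{r}$ diffeomorphism. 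To promote this to a global diffeomorphism of $\TT^{N}$, I would note that $\Psi_{I}$ is homotopic to the identity through $\theta\mapsto\theta+s\,\omega(I)v_{I}(\theta)$, hence has topological degree one; a local diffeomorphism of a compact connected manifold onto itself with nonvanishing Jacobian is a covering, and a degree-one covering is a diffeomorphism. Equivalently one may lift to $\RR^{N}$ and apply Hadamard's global inverse theorem.

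For the linearization, since $\dot I=0$ along \eqref{yundongfangcheng}, the chain rule gives $\dot\phi=D\Psi_{I}(\theta)\dot\theta=D\Psi_{I}(\theta)\,a(\theta)\omega(I)$. Using the rank-one structure and the cohomological identity one more time,
\begin{equation*}
D\Psi_{I}(\theta)\,a(\theta)\omega(I)=a(\theta)\omega(I)+\omega(I)\bigl(a(\theta)\omega(I)\cdot\nabla_{\theta}v_{I}(\theta)\bigr)=a(\theta)\omega(I)(1+b(\theta))=\bar a\,\omega(I),
\end{equation*}
which is exactly the desired constant-speed flow. For the measure statement, the change-of-variables formula $d\phi=|\det D\Psi_{I}|\,d\theta=(\bar a/a(\theta))\,d\theta$ together with $\rho(\theta)=C\,a(\theta)^{-1}$ gives $(\Psi_{I})_{\#}\bigl(\rho(\theta)\,d\theta\bigr)=(C/\bar a)\,d\phi$, a constant multiple of Lebesgue measure on $\TT^{N}$.

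The routine pieces are the Jacobian computation and the pushforward identity; the only step that demands care is the promotion from local to global invertibility, since the torus forces a topological rather than purely analytic argument. I expect this degree/covering step to be the conceptual bottleneck, whereas everything else reduces to the cohomological identity $\omega(I)\cdot\nabla_{\theta}v_{I}=b$ combined with the regularity already established in Lemmas~\ref{cunzaiweiyi}--\ref{boundofpsi}.
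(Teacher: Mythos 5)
Your proposal is correct and follows essentially the same route as the paper: the rank-one Jacobian computation combined with the cohomological equation to get $\det D_\theta\Psi_I=\bar a/a(\theta)>0$, the local-to-global promotion via the degree/covering argument, the chain-rule linearization, and the change-of-variables pushforward. The only (immaterial) difference is that you obtain degree one by homotoping $\Psi_I$ to the identity, whereas the paper integrates $\det D_\theta\Psi_I$ over $\TT^N$ and observes that the integral equals $(2\pi)^N$; both justifications are standard and equally valid.
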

\begin{proof}
  The derivative and Jacobian determinant of $\Psi_{I}$ are
  \begin{equation}\label{eq:Jacobian}
D_\theta\Psi_I(\theta)=I_{N}+\omega(I)\otimes\nabla_\theta v_I(\theta),
\qquad
\det D_\theta\Psi_I(\theta)=1+\omega(I)\cdot\nabla_\theta v_I(\theta)=\frac{\bar a}{a(\theta)}\,>\,0.
\end{equation}
Thus $\Psi_I$ is a local $C^{r}$ diffeomorphism and orientation-preserving. Furthermore, since
\begin{equation}\label{eq:degree-1}
\int_{\TT^N}\det D_\theta\Psi_I(\theta)\,\dd\theta
= \int_{\TT^N}\frac{\bar a}{a(\theta)}\,\dd\theta
= \bar a \int_{\TT^N} a(\theta)^{-1}\,\dd\theta
= (2\pi)^N,
\end{equation}
and $\TT^{N}$ is compact and connected, hence the local diffeomorphism $\Psi_{I}$ is a covering map, which implies that its topological degree is 
which means that the topological degree 
\begin{equation*}
  \frac{1}{(2\pi)^{N}}\int \det D_{\theta}\Psi_{I}=1.
\end{equation*}
As any covering map with topological degree 1 must be a global homeomorphism, it follows that $\Psi_{I}$ is a $C^{r}$ diffeomorphism.
Moreover, the following equation can be easily taken by \eqref{eq:Jacobian}, 
\begin{equation*}
\begin{aligned}
\dot\phi
&= D_\theta\Psi_I(\theta)\,\dot\theta
= \big(I_{N}+\omega(I)\otimes\nabla_\theta v_I(\theta)\big)\,a(\theta)\,\omega(I)\nonumber\\
&= a(\theta)\,\big(1+\omega(I)\cdot\nabla_\theta v_I(\theta)\big)\,\omega(I)
= \bar a\,\omega(I), 
\end{aligned}
\end{equation*}
and this means the new angle variable follows a constant-speed linear flow: $\phi(t)=\phi_0+\bar a\,\omega(I)\,t$.
On the other hand, for any test function
$h\in C(\TT^N)$, denoting $\nu_I = (\Psi_I)_{\#}\big(\rho(\theta)\,\dd\theta\big)$ as the pushforward measure, we have
\begin{equation*}
  \begin{aligned}
\int_{\TT^N} h(\phi)\,\dd\nu_I(\phi)
&= \int_{\TT^N} h\big(\Psi_I(\theta)\big)\,\rho(\theta)\,\dd\theta
= \int_{\TT^N} h(\phi)\,\frac{\rho\big(\Psi_I^{-1}(\phi)\big)}{\det D_\theta\Psi_I\big(\Psi_I^{-1}(\phi)\big)}\,\dd\phi \nonumber\\
&= \int_{\TT^N} h(\phi)\,\underbrace{\frac{\rho(\theta)}{\det D_\theta\Psi_I(\theta)}}_{=\ \rho(\theta)\,a(\theta)\,/\,\bar a\ =\ \bar a^{-1}}\,\dd\phi
= \bar a^{-1}\int_{\TT^N} h(\phi)\,\dd\phi.
  \end{aligned}
\end{equation*}
Here we used $\rho(\theta)=m(\theta)^{1/N}$ and $a(\theta)=m(\theta)^{-1/N}$, which give
\[
\frac{\rho(\theta)}{\det D_\theta\Psi_I(\theta)}
=\frac{\rho(\theta)}{\bar a/a(\theta)}
=\frac{\rho(\theta)\,a(\theta)}{\bar a}=\frac{1}{\bar a}.
\]
That is, $\nu_I=\bar a^{-1}\,\dd\phi$ is a constant multiple of the Lebesgue measure on $\TT^N$.

\end{proof}

\section{The convergence of statistical ensembles}\label{sec:3}

In this subsection, we present the principal results. We establish the convergence rate of the ensemble and the convergence of the ensemble itself.

\begin{theorem}\label{theorem3.1}
Assume that system~\eqref{WHS-triple} satisfies Assumption~\ref{ass:nonresonance} together with the hypotheses of Lemma~\ref{cunzaiweiyi}. Using the conjugacy $\phi = \Psi_{I}(\theta)$, define the flow
\begin{equation*}
    \Phi_{t}^{\theta}(I,\theta) = \phi_{0} + \bar{a}\,\omega(I)\,t .
\end{equation*}
The invariant measure is given by $\mathrm{d}\mu_\ast = \rho(\theta)\,\mathrm{d}\theta\,\mathrm{d}I$. Let $f_0 \in W_{I}^{1,1}(\Omega\times \TT^{N})$ be the initial density, and set $W(I):=\int_{\TT^{N}} f_0(I,\theta)\,\rho(\theta)\,\mathrm{d}\theta$. If $G \in C^{r}(\bar{\Omega} \times \TT^{N})$ with $r > N+1$ and $W \in W^{1,1}(\Omega)$, then there exists a constant $C>0$-depending only on $(\alpha,\tau,\lambda)$, the boundedness of $m$, and bounds on the derivatives of $\omega$-such that, for all $t \geq 1$,
\begin{equation}\label{eq:theend}
\Big|\int G\,\mathrm{d}P_t - \int G\,\mathrm{d}P_{\mathrm{eq}}^{(m)}\Big|
\ \le\ \frac{C}{t}\,\|G\|_{C^{r}}\,\big(\|W\|_{W^{1,1}}+\|W\|_{L^{1}}\big).
\end{equation}
\end{theorem}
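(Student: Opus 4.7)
The plan is to apply the angular conjugacy $\phi=\Psi_I(\theta)$ from Theorem~\ref{theorem2.1} to recast $\int G\,\dd P_t$ as a sum of oscillatory integrals over $\Omega$, and then to extract the $1/t$ decay via one integration by parts on each Fourier mode, with the boundary contributions controlled by the $W^{1,1}(\Omega)\hookrightarrow L^{1}(\partial\Omega)$ trace theorem on the Lipschitz domain $\Omega$.

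First I would pass to the linearized coordinates $(I,\phi)$: by Theorem~\ref{theorem2.1} the flow becomes $(I,\phi_0)\mapsto(I,\phi_0+\bar a\,\omega(I)\,t)$ and $\rho(\theta)\,\dd\theta\,\dd I$ pushes forward to $\bar a^{-1}\dd\phi\,\dd I$. Setting $\tilde G(I,\phi):=G(I,\Psi_I^{-1}(\phi))$ and $\tilde f_0(I,\phi):=f_0(I,\Psi_I^{-1}(\phi))$, Fourier expansion in $\phi$ together with Parseval yields
\[
\int G\,\dd P_t \;=\; \bar a^{-1}(2\pi)^N\sum_{n\in\ZZ^N}\int_\Omega \tilde G_n(I)\,\tilde f_{0,-n}(I)\,e^{\ii t\bar a\,n\cdot\omega(I)}\,\dd I.
\]
A direct computation using \eqref{eq:Jacobian} shows $\tilde f_{0,0}(I)=(2\pi)^{-N}\bar a\,W(I)$, so the $n=0$ term coincides with $\int G\,\dd P_{\mathrm{eq}}^{(m)}$ via \eqref{1.44444}. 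The problem therefore reduces to estimating the nonresonant sum $\sum_{n\ne 0}R_n(t)$.

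Next, for each $n\ne 0$ I would exploit the uniform nondegeneracy (D2). The phase $\Phi_n(I):=\bar a\,n\cdot\omega(I)$ satisfies $|\nabla\Phi_n(I)|\ge\bar a\lambda|n|$, so the operator $L_n u:=(\ii t|\nabla\Phi_n|^2)^{-1}\nabla\Phi_n\cdot\nabla_I u$ fulfils $L_n e^{\ii t\Phi_n}=e^{\ii t\Phi_n}$. One integration by parts against $L_n^{*}$ applied to $\tilde G_n\tilde f_{0,-n}$ yields an interior contribution of order $1/(t|n|)$ — since the vector field $\nabla\Phi_n/|\nabla\Phi_n|^2$ and its divergence are $O(1/|n|)$ thanks to the boundedness of $D\omega,D^{2}\omega$ — together with a boundary integral on $\partial\Omega$. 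Invoking the trace inequality $\|u\|_{L^1(\partial\Omega)}\le C_\Omega\|u\|_{W^{1,1}(\Omega)}$ bounds both pieces, producing
\[
|R_n(t)|\;\le\;\frac{C}{t|n|}\,\|\tilde G_n\|_{W^{1,\infty}(\bar\Omega)}\,\|\tilde f_{0,-n}\|_{W^{1,1}(\Omega)}.
\]
I would then control the two factors separately. The smoothness $G\in C^{r}$ together with Lemmas~\ref{sol:Regularity} and \ref{boundofpsi} gives $\|\tilde G_n\|_{W^{1,\infty}}\le C\|G\|_{C^{r}}(1+|n|)^{-(r-1)}$ (at worst one order of Fourier decay is lost to the $I$-derivative acting through $\Psi_I^{-1}$). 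For $\tilde f_{0,-n}$ the $I$-derivative falls either on $f_0$ or on $e^{\ii n\cdot\Psi_I(\theta)}$; in the latter case Lemma~\ref{boundofpsi} bounds the extra factor $|n|\cdot\|\partial_I\Psi_I\|_{L^\infty}$. After integrating out $\theta$ and identifying the surviving marginal with $W$, this yields $\|\tilde f_{0,-n}\|_{W^{1,1}(\Omega)}\le C(1+|n|)(\|W\|_{W^{1,1}}+\|W\|_{L^1})$. Combining both estimates, the mode-wise bound becomes $|R_n(t)|\le (C/t)(1+|n|)^{-(r-1)}\|G\|_{C^{r}}(\|W\|_{W^{1,1}}+\|W\|_{L^1})$, and the series $\sum_{n\ne 0}(1+|n|)^{-(r-1)}$ converges precisely because $r>N+1$, giving \eqref{eq:theend}.

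The hardest step will be the boundary control. Since $f_0$ has no prescribed boundary behaviour and is only $W^{1,1}$ in $I$, the surface integral generated by integration by parts cannot be discarded; bounding it requires the $W^{1,1}(\Omega)\hookrightarrow L^{1}(\partial\Omega)$ trace inequality on the Lipschitz domain together with a careful approximation by $C_c^{\infty}(\Omega)$ functions in the $W^{1,1}$-topology so that the approximation error feeds into the volumetric norm rather than blowing up at the boundary. A companion delicacy is that differentiating $\tilde f_{0,-n}$ in $I$ generates a growth factor of $|n|$ through $\partial_I\Psi_I$, which must be absorbed by the extra Fourier decay $|n|^{-(r-1)}$ of $\tilde G_n$; the summability threshold $r>N+1$ is sharp precisely because of this bookkeeping between action regularity and angular Fourier decay.
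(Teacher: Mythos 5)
Your proposal is correct and follows the same overall strategy as the paper: conjugate to the constant-speed flow, Fourier-expand, reduce to oscillatory integrals $\int_\Omega A_n e^{\ii t\Phi_n}\dd I$ with phase $\Phi_n=\bar a\,n\cdot\omega$, integrate by parts once along $V_n=\nabla\Phi_n/\|\nabla\Phi_n\|^2$ (which is $O(|n|^{-1})$ together with its divergence by (D2)), and sum using $r>N+1$. Two of your technical choices genuinely differ from the paper's. First, you expand the pulled-back observable $\tilde G=G\circ(\mathrm{id},\Psi_I^{-1})$ rather than $G$ itself; the paper instead evaluates $G$ directly at the linearized angle $\phi(t)$ (consistent with how it defines $\Phi^\theta_t$), so its amplitude is $G_nM_n$ with $G_n$ the Fourier coefficients of $G(I,\cdot)$. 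Your route is the more faithful reading of ``observable along the original flow,'' at the price of needing $\Psi_I^{-1}\in C^r$ in $\phi$ (so implicitly $r<q-\tau-N$ from Lemma~\ref{sol:Regularity}) and of losing one power of $|n|$ in the decay of $\tilde G_n$ after the $I$-derivative acts through $\Psi_I^{-1}$; your bookkeeping still closes exactly at $r>N+1$. Second, and more substantively, you control the boundary contribution by the Gagliardo trace inequality $\|u\|_{L^1(\partial\Omega)}\le C_\Omega\|u\|_{W^{1,1}(\Omega)}$, keeping the explicit surface term (which also carries the $1/t$ factor), whereas the paper kills the boundary term by approximating $A_n$ with $A_n^{(j)}\in C_c^\infty(\Omega)$ ``by density of $C_c^\infty$ in $W^{1,1}$.'' That density claim is false on a bounded Lipschitz domain (the $W^{1,1}$-closure of $C_c^\infty(\Omega)$ is $W^{1,1}_0(\Omega)$, and $A_n$ has no reason to have zero trace), so your trace-theorem variant is the more robust way to justify this step; just be aware that your parenthetical suggestion to also approximate by $C_c^\infty$ functions in the $W^{1,1}$ topology would reintroduce the same issue — the correct mollification is by $C^\infty(\bar\Omega)$ functions, retaining the boundary integral you already know how to bound.
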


\begin{proof}
For each fixed $I \in \Omega$, let
\begin{equation*}
\nu_I := (\Psi_I)_{\#}\big( f_0(I,\theta)\,\rho(\theta)\,\mathrm d\theta \big)
\end{equation*}
denote the measure on $\TT^N$ obtained by pushing forward the measure $f_0(I,\theta)\,\rho(\theta)\,\mathrm d\theta$ via the change of variables $\phi = \Psi_I(\theta)$. For any test function $h \in C(\TT^N)$,
\begin{equation*}
\int_{\TT^N} h(\phi)\,\mathrm d\nu_I(\phi)
\;=\; \int_{\TT^N} h\big(\Psi_I(\theta)\big)\, f_0(I,\theta)\,\rho(\theta)\,\mathrm d\theta .
\end{equation*}
Taking $\mathrm d\phi$ (Lebesgue measure) as the reference measure, $\nu_I$ admits the density
\begin{equation}\label{eq:tildef0-def}
\tilde f_0(I,\phi)
:= f_0\big(I,\Psi_I^{-1}(\phi)\big)\,\frac{\rho\big(\Psi_I^{-1}(\phi)\big)}{\det D_\theta\Psi_I\big(\Psi_I^{-1}(\phi)\big)}.
\end{equation}
Consequently,
\begin{equation*}
\int_{\TT^N} h(\phi)\,\mathrm d\nu_I(\phi)
\;=\; \int_{\TT^N} h(\phi)\,\tilde f_0(I,\phi)\,\mathrm d\phi .
\end{equation*}
On the other hand, by Theorem~\ref{theorem2.1} and its proof we have $(\Psi_I)_{\#}(\rho\,\mathrm d\theta)=\bar a^{-1}\,\mathrm d\phi$ and, moreover, $\det D_\theta\Psi_I=\bar a/a$ and $\rho=1/a$. Hence \eqref{eq:tildef0-def} simplifies to
\begin{equation}\label{eq:tildef0-simplified}
\tilde f_0(I,\phi)
= f_0\big(I,\Psi_I^{-1}(\phi)\big)\,\frac{\rho}{\det D\Psi_I}\Big|_{\theta=\Psi_I^{-1}(\phi)}
= \frac{1}{\bar a}\, f_0\big(I,\Psi_I^{-1}(\phi)\big).
\end{equation}
Taking $h(\phi) = G\big(I,\phi+\bar a\,\omega(I)\,t\big)$, we have
\begin{align*}
\int G\,\mathrm{d}P_t 
&= \int_{\Omega}\int_{\TT^N} G\big(I,\Phi_t^\theta(I,\theta)\big)\, f_0(I,\theta)\,\rho(\theta)\,\mathrm{d}\theta\,\mathrm{d}I\\
&= \int_{\Omega}\int_{\TT^N} G\big(I,\phi_0+\bar a\,\omega(I)\,t\big)\, \tilde f_0(I,\phi_0)\,\mathrm{d}\phi_0\,\mathrm{d}I.
\end{align*}
Expand $G(I,\cdot)$ in Fourier series and exchange sum and integration:
\begin{equation*}
\int G\,\dd P_t
=\sum_{n\in\ZZ^N}\int_{\Omega} G_n(I)\,e^{\,\ii t\,\bar a\,(n\cdot\omega(I))}
\underbrace{\Big(\int_{\TT^N} e^{\,\ii n\cdot\phi_0}\,\tilde f_0(I,\phi_0)\,\dd\phi_0\Big)}_{=:~M_n(I)}\,\dd I.
\end{equation*}
Combining equation \eqref{1.44444} with conjugacy $\phi = \Psi_{I}(\theta)$, we have
\begin{equation*}
\int G \dd P_{\mathrm{eq}}^{(m)}
= \int_{\Omega}\left(\frac{\int_{\TT^N} G(I,\theta)\rho(\theta)\dd\theta}{\int_{\TT^N}\rho(\vartheta)\dd\vartheta}\right) W(I) \dd I =\int_{\Omega}\left(\frac{1}{(2\pi)^N}\int_{\TT^N} G(I,\phi)\,\dd\phi\right)W(I)\,\dd I=\int_{\Omega} G_0(I)\,M_0(I)\,\dd I.
\end{equation*}
Then 
\begin{equation}\label{eq:main-diff-CN}
\int G\,\dd P_t-\int G\,\dd P_{\mathrm{eq}}^{(m)}
=\sum_{n\neq 0}\ \mathcal I_n(t),
\qquad
\mathcal I_n(t):=\int_{\Omega} A_n(I)\,e^{\,\ii t\,\Phi_n(I)}\,\dd I,
\end{equation}
where
\begin{equation}\label{eq:An-Phi-CN}
A_n(I):=G_n(I)\,M_n(I),\qquad
\Phi_n(I):=\bar a\,\big(n\cdot\omega(I)\big).
\end{equation}
Since
\begin{equation*}
|M_n(I)| \le \int_{\TT^N} \tilde f_0(I,\phi_0)\,\dd\phi_0
= \int_{\TT^N} f_0(I,\theta)\,\rho(\theta)\,\dd\theta \;=:\; W(I),
\end{equation*}
it follows that $M_n\in L^1(\Omega)$, and
\begin{equation*}
\|A_n\|_{L^1(\Omega)} \le \|G_n\|_{L^\infty(\Omega)}\,\|W\|_{L^1(\Omega)}.
\end{equation*}
In addition, since $f_0\in W^{1,1}_I(\Omega\times\TT^N)$ and
$\partial_I\Psi_I(\theta)$ is uniformly bounded on $\Omega\times\TT^N$  (by lemma \ref{boundofpsi}), it follows that for each fixed $n\in\ZZ^N$,
\begin{equation*}
\partial_I M_n(I)
=\int_{\TT^N} e^{\ii n\cdot \Psi_I(\theta)}
\Big[\partial_I f_0(I,\theta)
+\ii\,(n\!\cdot\!\partial_I\Psi_I(\theta))\,f_0(I,\theta)\Big]\rho(\theta)\,\dd\theta,
\end{equation*}
and hence
\begin{equation}\label{eq:dIMn-L1-CN}
\|\partial_I M_n\|_{L^1(\Omega)}
\ \le\ \|W^{(1)}\|_{L^1(\Omega)}
\ +\ |n|\,C_\Psi\,\|W\|_{L^1(\Omega)},
\qquad
W^{(1)}(I):=\!\int_{\TT^N}\!|\partial_I f_0(I,\theta)|\,\rho(\theta)\,\dd\theta,
\end{equation}
where $C_\Psi:=\|\partial_I\Psi_I\|_{L^\infty(\Omega\times\TT^N)}$. Consequently,
\begin{equation}\label{eq:W11-An-CN}
  \begin{aligned}
  \|A_n\|_{W^{1,1}(\Omega)}&
\ \leq\ \|\nabla_I G_n\|_{L^\infty}\,\|M_n\|_{L^1}
\ +\ \|G_n\|_{L^\infty}\,\|\partial_I M_n\|_{L^1}\\
&\ \leq\ C\,(1+|n|)^{-r}\left(\|W\|_{L^1}+\|W^{(1)}\|_{L^1}+|n|\,\|W\|_{L^1}\right).
  \end{aligned}
\end{equation}
Using the density of $C_c^\infty(\Omega)$ in $W^{1,1}(\Omega)$ on a Lipschitz domain, choose
$A_n^{(j)}\in C_c^\infty(\Omega)$ such that
\begin{equation*}
\|A_n^{(j)}-A_n\|_{W^{1,1}(\Omega)}\ \xrightarrow[j\to\infty]{}\ 0 .
\end{equation*}
Set
\begin{equation*}
\mathcal I_n^{(j)}(t):=\int_\Omega A_n^{(j)}\,e^{\ii t\Phi_n}\,\dd I.
\end{equation*}
Then
\begin{equation*}
\mathcal I_n^{(j)}(t)=\frac{1}{\ii t}\int_\Omega e^{\ii t\Phi_n}\,\Big[(\nabla\cdot V_n)\,A_n^{(j)} + V_n\!\cdot\nabla A_n^{(j)}\Big]\dd I,
\end{equation*}
where
\begin{equation*}
V_n(I)=\frac{\nabla_I\Phi_n(I)}{\|\nabla_I\Phi_n(I)\|^2}
=\frac{D\omega(I)^{\top}n}{\|D\omega(I)^{\top}n\|^2}\,\bar a^{-1}.
\end{equation*}
It follows that
\begin{equation*}
|\mathcal I_n^{(j)}(t)|
\ \le\ \frac{1}{t}\Big(\|\nabla\!\cdot V_n\|_{L^\infty}\,\|A_n^{(j)}\|_{L^1}
+\|V_n\|_{L^\infty}\,\|\nabla A_n^{(j)}\|_{L^1}\Big).
\end{equation*}
By Assumption~\ref{ass:nonresonance},
\begin{equation*}
\|V_n\|_{L^\infty(\Omega)}\ \lesssim\ |n|^{-1},\qquad
\|\nabla V_n\|_{L^\infty(\Omega)}\ \lesssim\ |n|^{-1},
\end{equation*}
we obtain
\begin{equation*}
|\mathcal I_n^{(j)}(t)|
\ \le\ \frac{C}{t}\,|n|^{-1}\,\Big(\|A_n^{(j)}\|_{L^1}+\|\nabla A_n^{(j)}\|_{L^1}\Big).
\end{equation*}
Next, on the one hand,
\begin{equation*}
|\mathcal I_n(t)-\mathcal I_n^{(j)}(t)|
=\Big|\int_\Omega (A_n-A_n^{(j)})\,e^{\ii t\Phi_n}\,\dd I\Big|
\ \le\ \|A_n-A_n^{(j)}\|_{L^1(\Omega)}
\ \xrightarrow[j\to\infty]{}\ 0.
\end{equation*}
On the other hand, since $\|A_n^{(j)}\|_{W^{1,1}}\to \|A_n\|_{W^{1,1}}$, we have
\begin{equation*}
\limsup_{j\to\infty} |\mathcal I_n^{(j)}(t)|
\ \le\ \frac{C}{t}\,|n|^{-1}\,\|A_n\|_{W^{1,1}(\Omega)}.
\end{equation*}
Therefore,
\begin{equation*}
  \begin{aligned}
|\mathcal I_n(t)| &\ \leq\ \frac{C}{t}\,|n|^{-1}\,\Big(\|A_n\|_{W^{1,1}(\Omega)}+\|A_n\|_{L^1(\Omega)}\Big)\\
&\ \leq\ \frac{C}{t}\,|n|^{-1}\,(1+|n|)^{-r}\,
\Big(\|W^{(1)}\|_{L^1}+\|W\|_{L^1}+|n|\,\|W\|_{L^1}\Big).
  \end{aligned}
\end{equation*}
Therefore,
\begin{equation*}
\sum_{n\neq 0} |\mathcal I_n(t)|
\ \le\ \frac{C}{t}\,\|G\|_{C^r}\,\big(\|W^{(1)}\|_{L^1}+\|W\|_{L^1}\big)
\ \le\ \frac{C}{t}\,\\|G\|_{C^r}\,\big(\|W\|_{W^{1,1}}+\|W\|_{L^1}\big),
\end{equation*}
which proves the theorem.
\end{proof}
By the preceding theorem, the statistical ensemble associated with system~(1.1) converges; in particular,
\begin{theorem}
  Under the hypotheses of~Theorem~\ref{theorem3.1},
\begin{equation*}
\lim_{t\to\infty}\int G\,\mathrm{d}P_t= \int G\,\mathrm{d}P_{\mathrm{eq}}^{(m)}.
\end{equation*}
\end{theorem}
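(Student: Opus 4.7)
The plan is to derive this limit statement as an immediate corollary of the quantitative convergence rate already established in Theorem~\ref{theorem3.1}. The essential observation is that the right-hand side of \eqref{eq:theend} tends to zero as $t\to\infty$, provided that the three norms $\|G\|_{C^{r}}$, $\|W\|_{W^{1,1}}$, and $\|W\|_{L^{1}}$ are all finite. Hence the entire content of the argument reduces to verifying that each of these quantities is finite under the standing hypotheses, after which a one-line squeeze gives the conclusion.

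First I would invoke Theorem~\ref{theorem3.1} to produce a constant $C>0$, depending only on $(\alpha,\tau,\lambda)$, the bounds on $m$, and the derivatives of $\omega$, such that
$$\Big|\int G\,\mathrm{d}P_t - \int G\,\mathrm{d}P_{\mathrm{eq}}^{(m)}\Big|\ \le\ \frac{C}{t}\,\|G\|_{C^{r}}\,\big(\|W\|_{W^{1,1}(\Omega)}+\|W\|_{L^{1}(\Omega)}\big)$$
for every $t\ge 1$. Next I would verify finiteness of the right-hand factors: $\|G\|_{C^{r}}<\infty$ because $G\in C^{r}(\bar\Omega\times\TT^{N})$ and $\bar\Omega\times\TT^{N}$ is compact; $\|W\|_{L^{1}(\Omega)}<\infty$ because $\rho$ is continuous and hence bounded on $\TT^{N}$, so that $f_0\in W^{1,1}_{I}(\Omega\times\TT^{N})\subset L^{1}(\Omega\times\TT^{N})$ together with Fubini yields $\|W\|_{L^{1}}\le \|\rho\|_{L^{\infty}}\|f_0\|_{L^{1}}$; and $\|W\|_{W^{1,1}(\Omega)}<\infty$ is a direct hypothesis of Theorem~\ref{theorem3.1}. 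Letting $t\to\infty$ then sends the right-hand side to $0$, which gives the stated limit.

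Since the statement is essentially a one-line deduction from Theorem~\ref{theorem3.1}, there is no substantive obstacle here: all the analytic difficulty, namely the construction of the angular conjugacy $\Psi_{I}$, the solution of the cohomological equation \eqref{eq:cheq} with Diophantine control of small divisors, the oscillatory integral bound obtained by integration by parts along the vector field $V_{n}$ with $W^{1,1}$ approximation and controlled boundary terms, and the mode summation at the threshold $r>N+1$, has already been absorbed into the quantitative rate. The only minor subtlety worth flagging is that \eqref{eq:theend} is stated for $t\ge 1$ rather than for all $t>0$, which is harmless for the limit $t\to\infty$ and requires no further comment.
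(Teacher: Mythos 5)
Your proposal is correct and is essentially identical to the paper's own proof, which simply lets $t\to\infty$ in \eqref{eq:theend}. The additional checks that $\|G\|_{C^r}$, $\|W\|_{W^{1,1}}$, and $\|W\|_{L^1}$ are finite are a harmless elaboration of the same one-line argument.
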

\begin{proof}
  In fact, the result follows by letting $t \to \infty$ in equation~\eqref{eq:theend}.
\end{proof}

\section{Conclusion}\label{sec:conclusion}
Within the framework of weighted Hamiltonian systems, this paper develops a linearization of the nonuniform angular flow and establishes quantitative convergence of statistical ensembles, thereby laying an analytic foundation for future limit theorems of ensembles under stochastic perturbations. 

In action--angle coordinates $(I,\theta)\in\Omega\times\TT^{N}$ we introduced the weighted symplectic form
\[
\sigma_m \;=\; m(\theta)^{1/N}\sum_{j=1}^{N}\dd\theta_j\wedge\dd I_j.
\]
To analyze long-time statistical behavior, we solved the cohomological equation
\[
\omega(I)\cdot\nabla_\theta v_I(\theta)=b(\theta),\qquad \avg{v_I}=0,
\]
and constructed the $I$-dependent conjugacy $\phi=\Psi_I(\theta)=\theta+\omega(I)\,v_I(\theta)$, which linearizes the angular dynamics to the constant-speed flow $\dot\phi=\bar a\,\omega(I)$. On this basis, the time evolution of the ensemble can be rewritten as oscillatory integrals over the action variable $I$. Introducing the vector field
\[
V_n(I)=\frac{\nabla_I\Phi_n(I)}{\|\nabla_I\Phi_n(I)\|^2}
=\frac{D\omega(I)^{\top}n}{\|D\omega(I)^{\top}n\|^2}\,\bar a^{-1},
\]
and carrying out a vector-field integration by parts on Lipschitz domains—using $W^{1,1}$-approximation and a careful control of boundary contributions—we obtain a $t^{-1}$ decay for each Fourier mode. Summing over modes under the summability condition $r>N+1$ yields the main quantitative estimate: for any $G\in C^{r}(\bar\Omega\times\TT^{N})$ with $r>N+1$ and any $f_0\in W^{1,1}_I(\Omega\times\TT^N)$,
\[
\Big|\int G\,\dd P_t-\int G\,\dd P_{\mathrm{eq}}^{(m)}\Big|
\ \le\ \frac{C}{t}\,\|G\|_{C^{r}}\,\big(\|W\|_{W^{1,1}}+\|W\|_{L^{1}}\big).
\]
In future work, we may consider related results for nearly integrable Hamiltonian systems, or extend the present framework to more general integrable symplectic manifolds (where global action–angle coordinates may not exist) and to settings with boundaries or singularities.

\section*{Acknowledgment}
    
    The Author Yong Li was supported by National Natural Science Foundation of China (12071175 , 12471183 and 12531009).

\section*{Declarations}
    {\bf Conflict of interest} The authors declare that they have no conflict of interest.




\end{document}